\numberwithin{equation}{section}
\newtheorem{theorem}[equation]{Theorem}
\newtheorem{proposition}[equation]{Proposition}
\newtheorem{hyp}[equation]{Hypothesis}
\newtheorem{ques}[equation]{Question}
\theoremstyle{definition}
\newtheorem{definition}[equation]{Definition}
\newtheorem{remark}[equation]{Remark}
\newtheorem{exam}[equation]{Example}
\DeclareMathOperator{\ch}{ch}
\DeclareMathOperator{\down}{Down}
\DeclareMathOperator{\CH}{CH}
\def\mc{\mathcal}
\def\R{\mathbb{R}}
\newcommand{\mbf}{\mathbf}
\title{Representation of convex geometries of convex dimension 3 by spheres}
\author{Kira Adaricheva}
\author{Arav Agarwal}
\author{Na'ama Nevo}
\subjclass[2020]{06A07, 06A15, 52A37, 52C05, 52C07}
\keywords{Convex geometry, anti-exchange closure operator, convex hull operator, convex dimension, poset dimension, sphere order, ellipsoid order}
\date{\today}
\begin{document}

\setlength{\footskip}{13.0pt}.

\thanks{The work on this paper was initiated while the second and third authors attended the New York Discrete Mathematics REU in the summer of 2022 and were mentored by the first author. The REU was supported by NSF grant \# 2051026 and led by PI Adam Sheffer and Co-PI Pablo Soberon Bravo (both CUNY Baruch College).
We appreciate the welcoming atmosphere of the Mathematics Department at Baruch College that generously hosted the REU, as well as the support of other mentors and student participants.}

\begin{abstract} A convex geometry is a closure system satisfying the anti-exchange property. This paper, following the work of Adaricheva and Bolat (2019) and the Polymath REU (2020), continues to investigate representations of convex geometries with small convex dimension by convex shapes on the plane and in spaces of higher dimension. In particular, we answer in the negative the question raised by Polymath REU (2020): whether every convex geometry of $cdim=3$ is representable by the circles on the plane. We show there are geometries of $cdim=3$ that cannot be represented by spheres in any $\mathbb{R}^k$, and this connects to posets not representable by spheres from the paper of Felsner, Fishburn and Trotter (1999). On the positive side, we use the result of Kincses (2015) to show that every finite poset is an ellipsoid order.
\end{abstract}

\maketitle

\section{Introduction}

This paper addresses a question raised in the first paper of the convex geometries team \cite{Poly20} developed during the Polymath REU-2020 project, see more details about the project in \cite{Lemons21}.

\begin{ques}\label{cdim3}
    Is it possible to represent all finite convex geometries of convex dimension 3 by circles in the plane using the convex hull operator acting on circles?
\end{ques}

Convex geometries are closure systems whose closure operator satisfies the anti-exchange property. \emph{Convex dimension} ($cdim$) refers to a certain parameter of such closure systems defined by Edelman and Jamison in \cite{EdJa85}.

The representation of convex geometries by the circles on the plane was first introduced by Cz\'edli \cite{Cz14}, and the first convex geometry which cannot be represented that way was found in Adaricheva and Bolat \cite{AdBo19}. The obstruction to such representation came in the form of the Weak Carousel Property that failed for some convex geometry on a 5-element set and whose convex dimension = 6.

In \cite{Poly20} it was discovered that among 672 non-isomorphic geometries on a 5-element set there are seven that cannot be represented due to the Triangle Property.
In particular, some among these seven had $cdim=4,5$. At the same time, all geometries of $cdim=3$ on a 5-element set had circular representations on the plane.

This left open Question \ref{cdim3} regarding the representation of arbitrary convex geometries of $cdim =3$: either by circles on the plane, or by the spheres in spaces $\mathbb{R}^k$ of dimension $k>2$. In this paper, we answer the question in the negative by reducing this problem to another problem that  brought considerable attention of many combinatorialists and order specialists in the 1990s.

In 1989, Brightwell and Winkler \cite{BW89} asked whether
every finite poset is a sphere order and suggested that the answer was negative. As usual, by a \emph{poset} we assume a pair $(X,\leq)$, where $X$ is a set and $\leq$ is a partial order: a reflexive, anti-symmetric and transitive binary relation on $X$. A poset is a \emph{sphere order}, when every $x\in X$ can be represented by a sphere $F(x)$ in $\mathbb{R}^k$ so that $x\leq y$ iff $F(x)\subseteq F(y)$.

Indeed, the question was solved in the negative by Felsner, Fishburn and Trotter \cite{FFT} by presenting a 3-dimensional poset that cannot be a sphere order in any space $\mathbb{R}^k$. Note that \emph{poset dimension} is defined as the smallest number $t$ such that the poset embeds into a product (with component-wise ordering) of $t$ chains. Equivalently, the partial order is recovered as an intersection of at most $t$ of its linear extensions. It is well-known that 2-dimensional posets are representable by closed intervals in $\mathbb{R}$, i.e., by spheres in one-dimensional space.

We observe that the closure space of any convex geometry can be thought of as a poset of closed sets of its associated closure operator. However, poset dimension and convex dimension of a convex geometry are fairly different parameters. Recently, the relation was studied in Knauer and Trotter \cite{KT23}, who presented a series of convex geometries which have poset dimension = 3, while their convex dimension grows unboundedly.

In the Propositions of Section~\ref{sec:cg_existence}, we show that the poset from \cite{FFT} which is not a sphere order can be made isomorphic to the poset of join-irreducible elements of some convex geometry of $cdim=3$. In Proposition \ref{bridge} we connect the elements of the base set of the convex geometry with join-irreducible elements of its closure space, which allows to conclude that the convex geometry will not be represented by the spheres in any $\mathbb{R}^k$, proving our main result in Theorem~\ref{main}.

Note that the result of \cite{FFT} does not identify the smallest size of a poset which is not a sphere order. This suggests a question about \emph{the smallest size $\rho$ of (the base set of) a convex geometry of $cdim=3$ that is not representable by spheres in any} $\mathbb{R}^n$.

We propose that at least $\rho > 6$:

\begin{hyp}
    Every convex geometry of $cdim=3$ on a 6-element set is representable by circles on the plane.
\end{hyp}

Note that according to the Online Encyclopedia of Integer Sequences (OEIS.org), there are almost 200,000 non-isomorphic convex geometries (equivalently, \emph{antimatroids}) on a 6-element set, with the exact number being given in sequence A224913.

Much less was known about \emph{ellipsoid orders}, i.e., finite posets that are representable by ellipsoids in some $\mathbb{R}^k$. We use a result of Kincses \cite{Kin17} regarding the representation of convex geometries by ellipsoids using the convex hull operator for ellipsoids to conclude that every finite poset is an ellipsoid order.

\section{Terminology and Known Results}

A convex geometry is a special case of a closure system. It can be defined through a closure operator, or by means of a closure space.

\begin{definition}\label{def:closure}
Let $X$ be a set. A mapping $\varphi \colon 2^X \to 2^X$ is called a \emph{closure operator}, if for all $Y, Z \in 2^X$:
\begin{enumerate}[label=(\arabic*), noitemsep]
    \item $Y \subseteq \varphi(Y)$,
    \item if $Y \subseteq Z$ then $\varphi(Y) \subseteq \varphi(Z)$,
    \item $\varphi(\varphi(Y)) = \varphi(Y)$.
\end{enumerate}
 
A subset $Y \subseteq X$ is \emph{closed} if $\varphi(Y) = Y$. The pair $(X,\varphi)$, where $\varphi$ is a closure operator, is called a \emph{closure system}.
\end{definition}

\begin{definition}\label{def:alignment}
Given any (finite) set $X$, a \textit{closure space} on $X$ is a family
$\mathcal{F}$ of subsets of $X$ which satisfies two properties:
\begin{enumerate}[label=(\arabic*), noitemsep]
    \item $X \in \mathcal{F}$,
    \item if $Y, Z \in \mathcal{F}$ then $Y \cap Z \in \mathcal{F}$.
\end{enumerate}
\end{definition}
\noindent{Closure systems are dual to closure spaces in the following sense.}

\noindent If $(X, \varphi)$ is a closure system, one can define a family of closed sets $\mathcal{F}_\varphi :=\{Y \subseteq X : \varphi(Y) = Y\}$. Then $\mathcal{F}_\varphi$ is a closure space.\\
\noindent If $\mathcal{F}$ is a closure space, then define $\varphi_{\mc F}: 2^X \rightarrow 2^X$ in the
following manner: for all $Y \subseteq X$, let $\varphi_{\mc F}(Y):=\bigcap  \{Z \in \mathcal{F}: Y \subseteq Z\}$. Then $(X, \varphi)$ is a closure system.

\begin{definition}\label{def:cg}
A closure system $(X,\varphi)$ is called a \emph{convex geometry} if
\begin{enumerate}[label=(\arabic*), noitemsep]
    \item $\varphi(\emptyset) = \emptyset$,
    \item for any closed set $Y\subseteq X$ and any distinct points $x,y \in X\setminus Y,$ if $x \in
    \varphi(Y \cup \{y\})$ then $y \not\in \varphi(Y \cup \{x\})$.
\end{enumerate}
\end{definition}
Property (2) above is called the \emph{Anti-Exchange Property}.

\noindent We can use duality between closure operators and closure spaces to provide another definition of a convex geometry.
\begin{definition}
\label{cg_alignment}
A closure system $(X,\varphi)$ is a convex geometry iff the corresponding closure space
$\mathcal{F}_\varphi$ satisfies the following two properties:
\begin{enumerate}[label=(\arabic*), noitemsep]
    \item $\emptyset \in \mathcal{F}_\varphi$,
    \item if $Y \in \mathcal{F}_\varphi$ and $Y \neq X$, then there exists $a\in X\setminus Y$
    such that $Y\cup\{a\} \in \mathcal{F}_\varphi$.
\end{enumerate}
\end{definition}

We now need to discuss an important parameter of convex geometries known as \emph{convex dimension}, first introduced in \cite{EdJa85}.

\begin{definition}\label{def:mon}
A closure space $\mathcal{F}$ is called \textit{monotone} if sets of $\mathcal{F}$ form a chain under inclusion.\\
\noindent (See Definition~\ref{def:chain} for a chain or linear order.)
\end{definition}

\begin{remark}\label{rem:linearCG}
Note that, due to Definition \ref{cg_alignment}, a monotone closure space $\mathcal{F}$ is a convex geometry iff $\mathcal{F}$ has exactly $|X|+1$ subsets of $X$: $\emptyset \subset \{x_1\} \subset \{x_1,x_2\} \subset \{x_1,x_2,x_3\}\subset \dots \{x_1, x_2,\dots, x_{n-1}\}\subset X=\{x_1,\dots x_{n-1},x_n\}$. Therefore, every monotone convex geometry, or \emph{linear geometry}, on $X$ is uniquely associated with some linear order on $X$: $x_1<x_2<\dots <x_{n-1} < x_n$.
\end{remark}

\begin{definition}\label{join}
Given two closure spaces $\mathcal{F}, \mathcal{K}$ on the same base set $X$, their \textit{join} $\mathcal{F}\vee \mathcal{K}$ is defined to be
the smallest closure space $\mathcal{T}$ such that $\mathcal{F}, \mathcal{K} \subseteq \mathcal{T}$. 
\end{definition}

It can be easily verified that 
$\mathcal{F}\vee \mathcal{K}=\{ F\cap K : F \in
\mathcal{F} \text{ and }  K \in \mathcal{K} \}$.

\noindent A known result in \cite{EdJa85} about joins of convex geometries follows.
\begin{theorem}\label{CGjoin}
Let $X$ be a finite set.
\begin{itemize}
    \item[(1)] If closure spaces $\mathcal{F}, \mathcal{K}$ on $X$ are convex geometries, then $\mathcal{F}\vee \mathcal{K}$ is a convex geometry as well.
    \item[(2)] Closure space $\mathcal{F}$ of any convex geometry on set $X$  can be expressed as the join of some collection of monotone
convex geometries on the same base set. 
\end{itemize}

\end{theorem}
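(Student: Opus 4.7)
The plan is to verify the convex geometry axioms from Definition~\ref{def:cg} for part (1) using the explicit description of the join from Definition~\ref{join}, and to construct a sufficient family of monotone convex geometries for part (2).

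For (1), the identity $\mathcal{F}\vee \mathcal{K} = \{F \cap K : F \in \mathcal{F}, K \in \mathcal{K}\}$ yields the pointwise formula $\varphi_{\mathcal{F}\vee\mathcal{K}}(S) = \varphi_\mathcal{F}(S) \cap \varphi_\mathcal{K}(S)$, from which $\varphi_{\mathcal{F}\vee\mathcal{K}}(\emptyset) = \emptyset$ is immediate. For anti-exchange, let $Y$ be closed in the join and write $F = \varphi_\mathcal{F}(Y)$ and $K = \varphi_\mathcal{K}(Y)$, so $Y = F \cap K$. Take distinct $x, y \in X \setminus Y$ with $x \in \varphi_{\mathcal{F}\vee\mathcal{K}}(Y \cup \{y\})$. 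Since $F \cap K = Y$, we have $x \notin F$ or $x \notin K$; assume without loss of generality $x \notin F$. A short check (using $\varphi_\mathcal{F}(F\cup\{y\}) = F$ whenever $y \in F$) then forces $y \notin F$, so the anti-exchange property of $\mathcal{F}$ applied at the $\mathcal{F}$-closed set $F$ yields $y \notin \varphi_\mathcal{F}(F \cup \{x\}) = \varphi_\mathcal{F}(Y \cup \{x\})$, whence $y \notin \varphi_{\mathcal{F}\vee\mathcal{K}}(Y \cup \{x\})$.

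For (2), call a linear order $\sigma\colon x_1 < \cdots < x_n$ on $X$ a \emph{shelling} of $\mathcal{F}$ if every initial segment $\{x_1,\ldots,x_i\}$ is $\mathcal{F}$-closed; by Remark~\ref{rem:linearCG}, each shelling corresponds to a monotone convex geometry $\mathcal{L}_\sigma \subseteq \mathcal{F}$. I would show $\mathcal{F} = \bigvee_\sigma \mathcal{L}_\sigma$ as $\sigma$ ranges over all shellings of $\mathcal{F}$. The inclusion $\bigvee_\sigma \mathcal{L}_\sigma \subseteq \mathcal{F}$ is immediate since each $\mathcal{L}_\sigma$ sits inside the closure space $\mathcal{F}$. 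For the reverse, it suffices to produce, for each closed $Y \in \mathcal{F}$, a shelling in which $Y$ appears as an initial segment, so that $Y \in \mathcal{L}_\sigma$ and therefore $Y \in \bigvee_\sigma \mathcal{L}_\sigma$.

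The main obstacle is the following shelling lemma: for any $\mathcal{F}$-closed sets $Z \subsetneq Y$, there exists $a \in Y \setminus Z$ with $Z \cup \{a\}$ closed in $\mathcal{F}$. Iterating the lemma from $Z = \emptyset$ builds a shelling of $Y$, and repeatedly applying Definition~\ref{cg_alignment}(2) from $Y$ extends it through $X$. To prove the lemma, define on $Y \setminus Z$ the relation $a \to b$ by $b \in \varphi_\mathcal{F}(Z \cup \{a\}) \setminus (Z \cup \{a\})$. Anti-exchange makes $\to$ antisymmetric, and monotonicity together with idempotence of $\varphi_\mathcal{F}$ (with anti-exchange again ruling out $c = a$) makes it transitive. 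Since $Y$ is closed, $\varphi_\mathcal{F}(Z \cup \{a\}) \subseteq Y$ for every $a \in Y \setminus Z$, so any $\to$-maximal $a$ (which exists because $Y\setminus Z$ is finite and non-empty) must satisfy $\varphi_\mathcal{F}(Z \cup \{a\}) = Z \cup \{a\}$, yielding the desired element.
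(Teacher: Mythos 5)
The paper does not actually prove Theorem~\ref{CGjoin}; it quotes it as a known result of Edelman and Jamison \cite{EdJa85}, so there is no in-paper argument to compare against. Your proof is correct and is essentially the classical one: the pointwise identity $\varphi_{\mathcal{F}\vee\mathcal{K}}(S)=\varphi_{\mathcal{F}}(S)\cap\varphi_{\mathcal{K}}(S)$ reduces both $\varphi(\emptyset)=\emptyset$ and anti-exchange for the join to the corresponding property in a single factor (and your reduction, including the step $\varphi_{\mathcal{F}}(F\cup\{x\})=\varphi_{\mathcal{F}}(Y\cup\{x\})$, checks out), while your shelling lemma, proved via the relation $a\to b$ whose irreflexivity, antisymmetry and transitivity follow from anti-exchange and idempotence, is the standard device for threading a maximal chain of closed sets through any given closed set, which yields the decomposition of $\mathcal{F}$ as a join of monotone (linear) subgeometries exactly as in \cite{EdJa85}.
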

This motivates the following definition.
\begin{definition}\cite{EdJa85}\label{cdim}
The \emph{convex dimension} of a convex geometry $\mbf G=(X,\varphi)$ is the minimal number $k$ such that closure space $\mathcal{F}_\varphi$ can be expressed as the join of $k$ monotone convex geometries on set $X$.
\end{definition}

To compute the convex dimension of a convex geometry, we can examine maximal cardinality antichains of meet-irreducibles in its closure space $\mathcal{F}_\varphi$, as discussed in Edelman and Saks \cite{EdSa88}. Thus, informally, the $cdim$ parameter of a convex geometry represents the diversity of closed sets with respect to the closure operator $\varphi$.

\

\noindent A particular example of a closure operator on a set is the \emph{convex hull operator}, where the base set $X$ is a set of points in Euclidean space $\mathbb{R}^k$.

\begin{definition}
\ 
\begin{itemize}
    \item[(1)] A set $S$ in $\mathbb{R}^k$ is called  \emph{convex} if for any two points $p,q \in S$, the line segment connecting $p$ and $q$ is also contained in $S$.
    \item[(2)] Given a set $S$ of points in $\mathbb R^k$, the \emph{convex hull} of $S$, denoted $\CH(S)$, is the intersection of all convex sets in $\mathbb R ^k$ which contain $S$. That is, it is the smallest convex set containing $S$.
\end{itemize}
\end{definition}

\noindent Comparing with Definition~\ref{def:closure}, we see that $\CH$ is a closure operator acting on $\R^k$.

Finally, we recall the definition of the convex hull operator for spheres introduced in \cite{Cz14}. If $x$ is a sphere in $\mathbb{R}^k$, then by $\tilde{x}$ we denote the set of points belonging to $x$. It is allowed that a sphere has a radius $0$, in which case it is a point.

\begin{definition}
Let $X$ be a finite set of spheres in $\mathbb{R}^k$. Define the convex hull operator for spheres, $\ch_s : 2^X \rightarrow 2^X$, as follows:
\[
\ch_s(Y) =\{x \in X: \tilde{x}\subseteq \CH\bigg(\bigcup_{y \in Y}\tilde{y}\bigg)\},
\]
for any $Y\in 2^X$.
\end{definition}

\noindent See the figure below for an illustration of the $\ch_s$ operator in $\R^2$. Observe that $\ch_s(\{a,b,c\}) = \{a,b,c,d,e\}$ and $\ch_s(\{a,c\}) = \{a,c,e\}$.

\begin{figure}[H]
\centering
\begin{subfigure}{.5\textwidth}
  \centering
  \includegraphics[width=.8\linewidth]{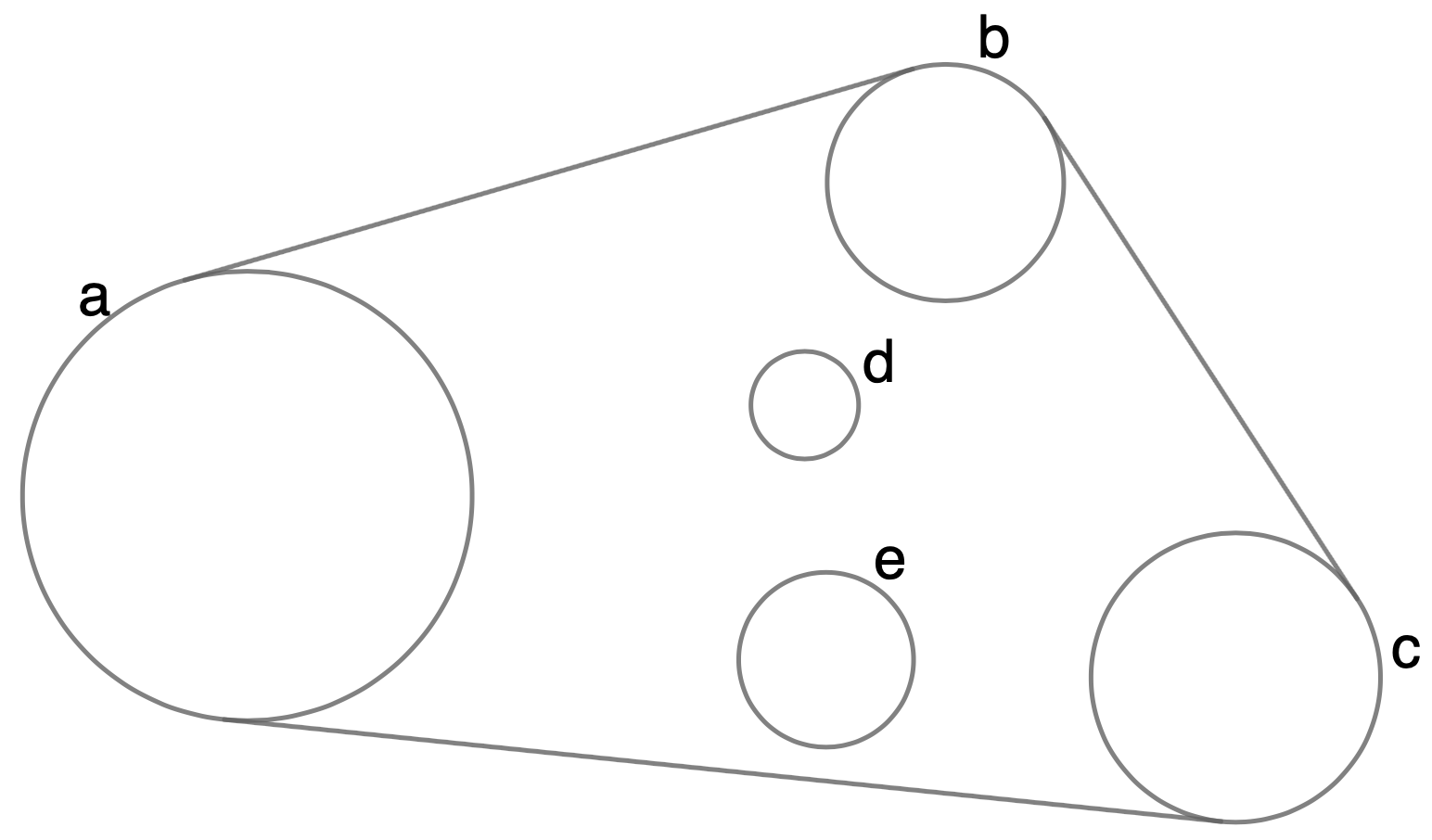}
  \caption{Circles $d, e$ are in the convex hull of $a, b, c$}
  \label{fig:sub1}
\end{subfigure}%
\begin{subfigure}{.5\textwidth}
  \centering
  \includegraphics[width=.8\linewidth]{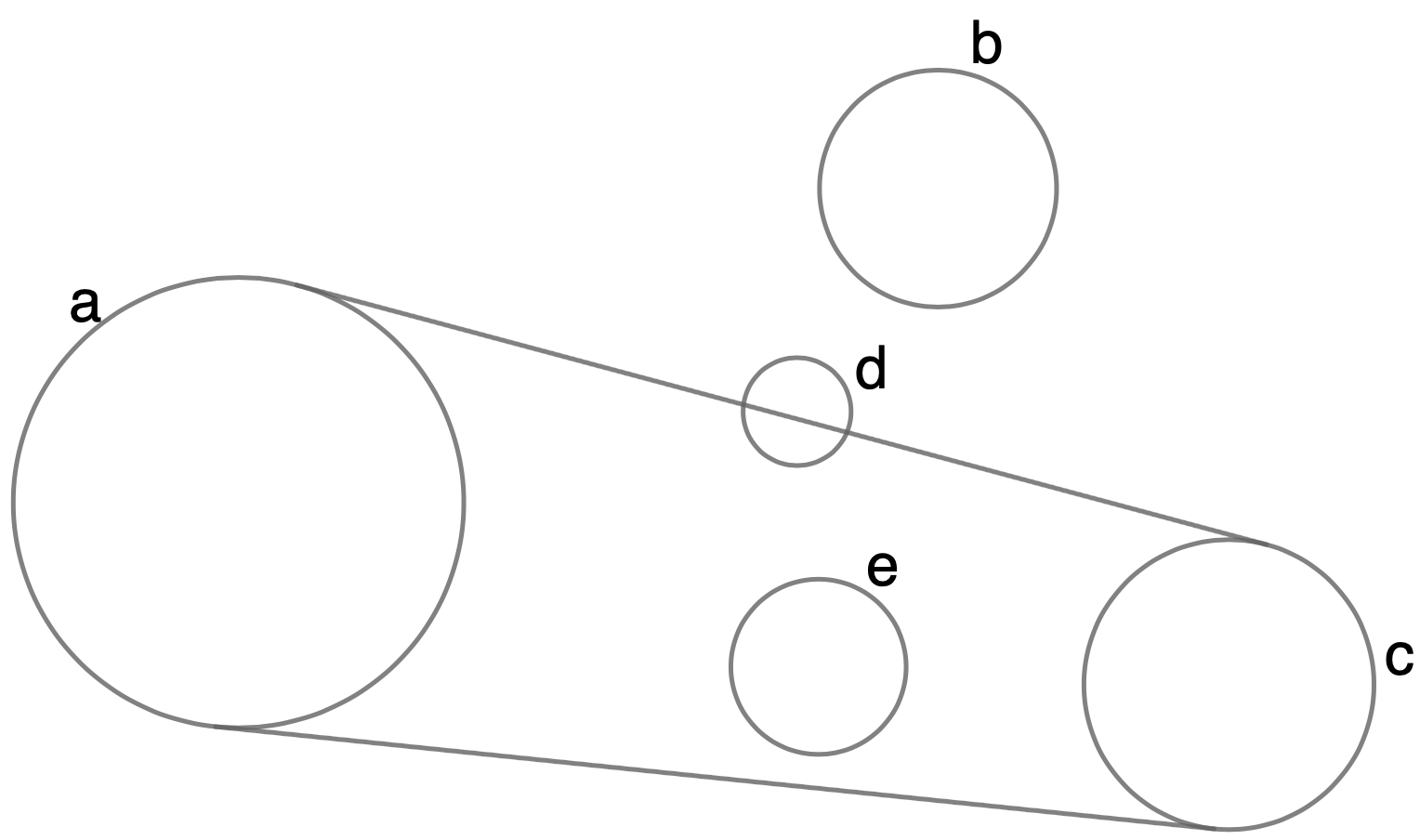}
  \caption{Circle $e$ is in convex hull of $a, c$ but $b, d$ are not}
  \label{fig:sub2}
\end{subfigure}
\caption{Convex hull operator for spheres}
\label{fig:test}
\end{figure}

It was established in \cite{Cz14} that the closure operator $\ch_s$ satisfies the Anti-exchange Property. Therefore, the closure system $(X,\ch_s)$ on the set of spheres $X$ in $\mathbb{R}^k$ is a convex geometry.

We say that a finite convex geometry $(X,\varphi)$ is \emph{represented by spheres in $\mathbb{R}^k$}, when there are $|X|$ spheres in $\mathbb{R}^k$ such that the action of $\varphi$ is identical to $\ch_s$. We can similarly define convex hull operators in $\mathbb{R}^k$ on convex shapes different from spheres, such as ellipsoids.

The survey paper \cite{EdJa85} was instrumental to start off the study of finite convex geometries, which are also the dual systems to \emph{antimatroids}. The study of infinite convex geometries was initiated in Adaricheva, Gorbunov and Tumanov \cite{AGT03}. To see the development of the topic, including infinite convex geometries, one needs to consult the more recent survey by Adaricheva and Nation \cite{AdNa16}. 

In \cite{Cz14} the representation of finite convex geometries was proposed by interpreting elements of base set $X$ as circles in the plane, and closure operator $\varphi$ as a convex hull operator acting on circles. The main result of the paper is that all finite convex geometries with \emph{convex dimension} at most 2 can be represented by circles on the plane.

In \cite{AdBo19} it was found that there is an obstruction for representation of convex geometries by circles on the plane, which allowed the authors to build an example of a convex geometry on a 5-element set of $cdim=6$. More obstructions for representation of geometries by circles were found in two papers written by subgroups of PolyMath-2020 team \cite{Poly20, Poly20-2}.

\

Passing to the terminology on posets, we recall that a set $X$ with a binary relation $\leq $ is called a partially ordered set (poset) if $\leq$ is reflexive, anti-symmetric and transitive. An important example of a poset is the family $\mathcal{F}$ of closed sets of a closure operator with relation $\subseteq$.

\begin{definition}\label{def:chain}
    A \emph{linear order}, or \emph{chain}, is a partial order $(X, \leq)$ where any two elements of $X$ are comparable, that is either $u \leq v$ or $v \leq u$, for any $u,v \in X$. We freely use the terms linear order and chain interchangeably.
\end{definition}

A relevant parameter of a poset is the \emph{the order dimension}, also known as the Dushnik-Miller dimension. Recall that a linear extension $(X,\leq^*)$ of poset $(X,\leq)$ is a poset with $\leq \subseteq \leq^*$, where $\leq^*$ is a linear order.

\begin{definition}
    The order dimension of a poset $(X,\leq)$ is the least integer $t$ for which we have a family of $t$ linear extensions $\leq_1, \ldots, \leq_t$ of $\leq$ such that $\leq = \bigcap_{i=1}^t \leq_i$.
\end{definition}

Equivalently, order dimension is the minimal number of chains such that the poset embeds into their direct product. For a comprehensive monograph on the topic, see Trotter \cite{Tr92}.

\begin{definition}
    Given a poset $\mbf P = (X,\leq)$, a function $F$ which assigns to each $x \in X$ a set $F(x)$ is called an inclusion representation of $\mbf P$ when $x \leq y$ if and only if $F(x) \subseteq F(y)$.
\end{definition}

In other words, an inclusion representation is the mapping which realizes $\mbf P$ as an inclusion order of some objects.

\begin{exam}\label{exam:down_set}
    Any poset has an inclusion representation. Given $\mbf P = (X, \leq)$, we can associate to each element $x \in X$ the set \[
    F(x)=\downarrow x = \{y \in X \, | \, y \leq x\}.
    \]
    %That is, $F(x) = \downarrow x$. 
    This is the \emph{down-set} generated by the element $x$. A down-set in general is a set $S \subseteq X$ such that if $s \in S$ and $x \leq s$, then $x \in S$.
    
    The transitivity of $\leq$ tells us that 
    \[
    \downarrow a \subseteq \downarrow b \text{ iff } a \leq b.\]
\end{exam}

Indeed, the idea of the above example is fundamental, and key to Birkhoff's representation theorem, often referred to as the fundamental theorem of finite distributive lattices. 

\begin{theorem}\cite{birkhoff}\label{thm:birkhoff}
    The lattice of down-sets of a poset is distributive. Any finite distributive lattice $L$ is isomorphic to the lattice of down sets of the partial order of the join-irreducible elements of $L$.
\end{theorem}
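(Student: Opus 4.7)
The plan is to handle the two claims in sequence. For the first, I would observe that in the family $\mathcal{O}(P)$ of down-sets of a poset $P$, the intersection and union of down-sets are again down-sets, so meets are intersections and joins are unions. Since set-theoretic intersection distributes over union in any power set, $\mathcal{O}(P)$ is automatically a distributive lattice. This is essentially a one-line check.

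For the second claim, let $L$ be a finite distributive lattice, let $J(L)$ be the subposet of join-irreducible elements, and define
\[
\eta \colon L \to \mathcal{O}(J(L)), \qquad \eta(x) = \{j \in J(L) : j \leq x\}.
\]
Each $\eta(x)$ is a down-set of $J(L)$ by transitivity, and $\eta$ is clearly order-preserving. The key auxiliary fact I would establish first is that in any finite lattice every element equals the join of the join-irreducibles below it, i.e.\ $x = \bigvee \eta(x)$; this follows by induction on the number of elements strictly below $x$, splitting into the cases $x \in J(L)$ and $x$ not join-irreducible. Given this, injectivity of $\eta$ is immediate: $\eta(x) = \eta(y)$ yields $x = \bigvee \eta(x) = \bigvee \eta(y) = y$. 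Order-reflection follows similarly, since $\eta(x) \subseteq \eta(y)$ forces $x = \bigvee \eta(x) \leq \bigvee \eta(y) = y$.

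The step where I expect the real work to happen is surjectivity, and this is where distributivity enters essentially. Given a down-set $D \subseteq J(L)$, set $x := \bigvee D$; then $D \subseteq \eta(x)$ is trivial, so the task is the reverse inclusion. For $j \in \eta(x)$ I would write
\[
j = j \wedge x = j \wedge \bigvee_{d \in D} d = \bigvee_{d \in D}(j \wedge d),
\]
using distributivity in the last step. Because $j$ is join-irreducible and this expresses $j$ as a finite join, some $j \wedge d$ must equal $j$, i.e.\ $j \leq d$ for some $d \in D$; since $D$ is a down-set, $j \in D$. Thus $\eta(x) = D$ and $\eta$ is a bijection, and a routine verification that $\eta$ preserves meets and joins (meets by intersection-of-down-sets, joins via $\eta(x \vee y) = \eta(\bigvee \eta(x) \cup \eta(y))$ together with the surjectivity argument) completes the proof that $\eta$ is a lattice isomorphism. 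The main obstacle, as expected, is exactly this distributivity-driven surjectivity: without it, not every down-set of $J(L)$ would be realized as $\eta(x)$, and the correspondence would break down.
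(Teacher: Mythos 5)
Your proof is correct and is the standard argument for Birkhoff's representation theorem; the paper does not prove this statement but cites it to \cite{birkhoff}, so there is no in-paper proof to compare against, and your route (down-sets form a ring of sets, $\eta(x)=\{j\in J(L): j\leq x\}$, every element is the join of the join-irreducibles below it, and distributivity plus join-irreducibility giving surjectivity) is exactly the classical one. The only details worth making explicit are the convention that the least element (being the empty join) is not counted as join-irreducible, so that the case $D=\emptyset$ and the step ``$j=\bigvee_{d\in D}(j\wedge d)$ forces $j=j\wedge d$ for some $d$'' are literally valid, and the observation that a bijection which preserves and reflects order is automatically a lattice isomorphism, which replaces your final routine verification.
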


In the theorem, the join-irreducible elements of the lattice of all down-sets of the poset are precisely the down-sets generated by singletons as illustrated in Example~\ref{exam:down_set}. Note that mapping $\varphi: Y\mapsto \downarrow Y$, that maps any subset $Y$ of partially ordered set $(X,\leq)$ into smallest down-set $\downarrow Y$ containing $Y$ is a closure operator on $X$ satisfying the Anti-Exchange Property. Thus, the lattice in Theorem \ref{thm:birkhoff} is also the lattice of closed sets of this closure operator, and finite distributive lattices are convex geometries.

\

\noindent Similar to the representation of convex geometries by convex shapes, we have the concept of representation of posets by spheres.

\begin{definition}
    A poset $\mbf P= (X, \leq)$ is a sphere order if there exists $k \geq 1$ such that $\mbf P$ has an inclusion representation using spheres in $\R^k$. That is, $F(x)$ for any $x \in X$ is required to be a sphere in $\R^k$.
\end{definition}

Before discussing results concerning representation of posets as sphere orders, we establish the key connection between representation of convex geometries by spheres and posets as sphere orders.

\begin{proposition}\label{bridge}
    Suppose a convex geometry $(X,\varphi)$ is represented by spheres in some $\R ^k$. Then, the poset of join-irreducible elements of the associated closure space $\mathcal{F}_\varphi$ is a sphere order in $\R ^k$.
\end{proposition}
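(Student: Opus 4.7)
The plan is to build an inclusion representation of the poset of join-irreducibles directly from the given sphere representation $s \colon X \to \text{spheres in } \R^k$ of $(X,\varphi)$ (so that $\varphi = \ch_s$ under this identification). For each join-irreducible closed set $J \in \mathcal{F}_\varphi$, I would take $F(J) := \tilde{s(x_J)}$, where $x_J$ is the unique extreme point of $J$. The first and most substantive step is therefore to pin down this bijection between join-irreducibles and extreme points: in any finite convex geometry, the lower covers of a closed set $C$ in $\mathcal{F}_\varphi$ are exactly the sets $C\setminus\{x\}$ for $x\in\ex(C)$, so $C$ is join-irreducible iff $|\ex(C)|=1$, and in that case $C=\varphi(\{x_C\})$. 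This gives the labeling $J\mapsto x_J$, and it is injective because distinct closed sets have distinct extreme point sets.

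With $x_J$ in hand, the sphere representation of $(X,\varphi)$ specializes to the clean description
\[
J = \varphi(\{x_J\}) = \{z \in X : \tilde{s(z)} \subseteq \CH(\tilde{s(x_J)})\} = \{z \in X : \tilde{s(z)} \subseteq \tilde{s(x_J)}\},
\]
since the convex hull of a single sphere is just the enclosed closed ball (whether one reads ``sphere'' as boundary or as ball). From here the verification that $F$ is an inclusion representation is a short chase. If $J_1 \subseteq J_2$, then $x_{J_1}\in J_1\subseteq J_2=\{z:\tilde{s(z)}\subseteq\tilde{s(x_{J_2})}\}$, forcing $F(J_1)=\tilde{s(x_{J_1})}\subseteq\tilde{s(x_{J_2})}=F(J_2)$. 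Conversely, if $\tilde{s(x_{J_1})}\subseteq\tilde{s(x_{J_2})}$, then every $z\in J_1$ has $\tilde{s(z)}\subseteq\tilde{s(x_{J_1})}\subseteq\tilde{s(x_{J_2})}$, placing $z\in J_2$.

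The main conceptual obstacle is really just Step 1 — making sure one can speak of ``the'' extreme point of a join-irreducible; the rest of the argument is essentially forced by the definition of $\ch_s$ restricted to singletons. I would also take a sentence to note that the resulting spheres live in the \emph{same} ambient $\R^k$ as the original representation, so no dimension is lost, and that $F$ being injective together with the inclusion equivalence above is exactly what it means for the poset of join-irreducibles to be a sphere order in $\R^k$.
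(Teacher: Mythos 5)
Your proposal is correct and follows essentially the same route as the paper: identify each join-irreducible of $\mathcal{F}_\varphi$ with the closure of a singleton, reuse the given spheres as the inclusion representation, and verify that $\varphi(\{u\})\subseteq\varphi(\{v\})$ holds iff the corresponding spheres are nested, via the definition of $\ch_s$ on singletons. The only cosmetic difference is that you justify the bijection between join-irreducibles and elements of $X$ from first principles (unique extreme point, cover structure of closed sets), whereas the paper cites the standard-closure-system correspondence $x\mapsto\varphi(\{x\})$; your handling of the boundary-versus-ball reading of ``sphere'' is at the same level of rigor as the paper's.
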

\begin{proof}
By the definition of representation of convex geometries by spheres, each element $x\in X$ is given by some sphere $F(x)$ in $\R^k$ so that $\varphi$ acts on spheres as the $\ch_s$ operator. It is well-known that in \emph{standard} closure systems $(X, \varphi)$ there is a one-to-one correspondence between elements of $X$ and $\mathrm{Ji}(\mc F_\varphi)$ (see, for example, \cite[Lemma 4-2.8]{AdNa16II}), where $\mathrm{Ji}(\mc F_\varphi)$ denotes the set of join-irreducible elements of the closure space $\mathcal{F}_\varphi$ associated to $X$. Convex geometries are standard closure systems, so this correspondence works in convex geometries. Specifically, the one-to-one correspondence between $X$ and $\mathrm{Ji}(\mc F_ \varphi)$ is given by $x\mapsto \varphi(\{x\})$.
Ordering by set inclusion, we obtain the poset $(\mathrm{Ji}(\mc F_\varphi), \leq) = (\{\varphi(\{u\}):u\in X\}, \leq)$, and we now show it is a sphere order.

Consider $\varphi(\{x\}) \in \mathcal{F}_\varphi$. Since we represented the geometry by spheres, in terms of the sphere representation $\varphi(\{x\})$ is exactly $\ch_s(F(x))$. By definition of $\ch_s$, we obtain 
\[\ch_s(F(x)) = \{F(y) \, | \, F(y) \subseteq F(x), y \in X\}.\]
Indeed, we have arrived at the case of Example~\ref{exam:down_set}, for if we consider the inclusion order of the spheres, we observe that 
$$\ch_s(F(x)) = \, \downarrow F(x),$$
and we have $\downarrow F(u) \subseteq \downarrow F(v)$ iff $F(u) \subseteq F(v)$ for any $u,v \in X$. We can now write for any $u, v \in X$:
$$ \varphi(\{u\})\leq \varphi(\{v\}) \iff \ch_s(F(u)) \subseteq \ch_s(F(v)) \iff \downarrow F(u) \subseteq \downarrow F(v) \iff F(u) \subseteq F(v).$$

Hence, we have shown that $\varphi(\{u\})\leq \varphi(\{v\})$ iff $F(u)\subseteq F(v)$. Therefore, the sphere representation of a convex geometry $(X,\varphi)$ simultaneously provides us the representation of the poset of join-irreducibles $(\{\varphi(\{u\}):u\in X\}, \leq)$ of the closure space $\mathcal{F}_\varphi$, proving the poset is a sphere order.
\end{proof}

As per the record in \cite{FFT}, the question of whether every finite 3-dimensional poset has an inclusion representation using circles in $\R^2$ was raised by Fishburn and Trotter at the Banff conference of 1984. In Sidney et al.\! \cite{Sy88} a finite 4-dimensional poset was found that was not a \emph{circle order} (i.e., not represented by spheres on the plane), and in Scheinerman and Wierman \cite{Sc88} it was shown by a Ramsey theoretic argument that the countably infinite 3-dimensional poset $\mathbb{Z}^3$ is not a circle order.

These results prompted the following more general question:

\begin{ques}\cite{BW89}
    Is every finite 3-dimensional poset representable as an inclusion order of spheres in some $k$-dimensional space?
\end{ques}

In \cite{FFT} we find the culmination of the search to answer this question, in the negative.

With the next simple definition in hand, we can formally state the main theorem from \cite{FFT}.

\begin{definition}\label{def:n_poset}
    For positive integers $n, t$, let $\mbf n$ denote the chain \[
    0 < 1 < \cdots < n-1
    \]
    and $\mbf n^t$ the cartesian chain product of $t$ copies of $\mbf n$, so that we obtain the following canonical partial ordering on $\mbf n^t$:
    \[(a_1, a_2, \ldots, a_t) \leq (b_1, b_2, \ldots, b_t) \text{ iff } a_i \leq b_i \,\, \text{for all } i.\]
    
\end{definition}

\begin{theorem}[2.1 of \cite{FFT}]\label{thm:FFT_poset}
 There exists an integer $n_0$ such that if $n\geq n_0$, the finite 3-dimensional poset $\mbf n^3$ is not a sphere order.  
\end{theorem}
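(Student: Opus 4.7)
The plan is to derive a contradiction from the assumption that $\mathbf{n}^3$ admits a sphere representation for arbitrarily large $n$, by combining an algebraic reformulation of sphere inclusion with a Ramsey-theoretic extraction. First, I would recast sphere inclusion as a sign condition on a bilinear form. Given a ball $B(c,r)$ in $\mathbb{R}^d$, associate the lift $v = (c, r, |c|^2 - r^2, 1) \in \mathbb{R}^{d+3}$. Expanding $(r_2 - r_1)^2 - |c_1-c_2|^2 = 2\langle c_1, c_2\rangle - 2 r_1 r_2 - (|c_1|^2-r_1^2) - (|c_2|^2-r_2^2)$ shows that $B(c_1,r_1) \subseteq B(c_2,r_2)$ is equivalent to $r_2 \geq r_1$ together with $Q(v_1, v_2) \geq 0$, where $Q$ is a fixed symmetric bilinear form (depending only on the dimension signature, not on the particular representation). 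Thus any sphere representation of $\mathbf{n}^3$ produces vectors $\{v_x : x \in \mathbf{n}^3\}$ whose pairwise $Q$-values, together with the radius ordering, encode the full comparability structure.

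Next, I would reduce to a canonical sub-representation on a large sub-grid using Ramsey-type theorems. A preliminary remark is that although the ambient dimension $d$ is a priori unbounded, at most $n^3$ spheres are used, so the affine span of their centers has dimension at most $n^3 - 1$; hence one may assume $d$ is bounded polynomially in $n$. I would then apply a product Ramsey argument (for instance, iterated Erd\H{o}s--Szekeres coordinate by coordinate, or the Graham--Rothschild theorem on parameter words) to pass to a large sub-grid $\mathbf{m}^3 \hookrightarrow \mathbf{n}^3$ on which every relevant geometric invariant built from $\{v_x\}$---the signs of pairwise $Q$-values for incomparable pairs, the radii orderings, and the signs of bounded-size determinants involving the $v_x$---varies monotonically in each of the three coordinate directions. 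This renders the sub-representation ``canonical'' in a combinatorial sense that is tailored to the next step.

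On the canonical sub-grid, the final step is to exhibit a pattern of comparabilities and incomparabilities whose induced sign constraints have no simultaneous solution. Concretely, I would select a small antichain in $\mathbf{m}^3$ whose members share carefully chosen common upper and lower bounds in $\mathbf{m}^3$, then write out the system consisting of $Q(v_x, v_y) < 0$ (for the antichain pairs), $Q(v_x, v_z) \geq 0$ (for the common comparabilities), the monotonicity relations coming from the Ramsey reduction, and the quadratic lift identities $v_x^{(\text{last}-1)} = |c_x|^2 - r_x^2$. The combinatorial heart of the argument---and the main obstacle---is choosing the antichain/bounds configuration so that these constraints collapse to an algebraic impossibility (for example, a linear combination of the forced inequalities producing $0 < 0$), and this is exactly where the three-fold chain product $\mathbf{n}^3$ is essential: two factors suffice only for interval orders in $\mathbb{R}^1$, while the third factor provides precisely the extra antichain-with-common-bounds structure needed to overdetermine the sign system once $n$ is large enough that Ramsey supplies the canonical sub-grid.
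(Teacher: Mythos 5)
The statement you are proving is quoted by the paper from Felsner, Fishburn and Trotter \cite{FFT}; the paper itself gives no proof, so your attempt has to be judged as a reconstruction of that (long and technical) argument. Your first two steps are sensible and not far in spirit from \cite{FFT}: lifting each ball $B(c,r)$ so that containment becomes ``$r_2\geq r_1$ together with a sign condition on a fixed bilinear form'' is a correct identity, restricting to the affine span of the centers legitimately bounds the ambient dimension (containment of balls depends only on center distances and radii), and a product-Ramsey canonicalization over the grid is indeed the kind of tool one expects here. Even at this stage, though, you need the coloring to have a number of colors bounded independently of $n$; ``signs of bounded-size determinants involving the $v_x$'' does not qualify as stated, because the lifted vectors live in dimension growing with $n$, so the family of such determinants (and hence the color set) is unbounded unless you replace them by something intrinsic such as Gram or Cayley--Menger data of bounded subsets.

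The genuine gap is your third step, which you yourself flag as ``the main obstacle'': no contradiction is actually produced. Moreover, the specific shape of contradiction you propose cannot work as described. A ``small antichain with carefully chosen common upper and lower bounds'' is a fixed finite poset, and every such poset is realizable by circles in the plane (take $k$ congruent circles clustered around a common point, a tiny circle inside their intersection, and a huge circle around everything), so its comparability/incomparability pattern alone imposes a perfectly feasible sign system; no linear combination of those constraints can yield $0<0$. Hence the entire burden falls on the unspecified ``monotonicity relations coming from the Ramsey reduction,'' and you give no indication of what these relations are, why they are forced, or how they overdetermine the system. This is precisely where the real work of \cite{FFT} lies: the obstruction is not a bounded forbidden subposet (all finite subconfigurations you would write down are sphere orders) but a global argument exploiting the whole $n\times n\times n$ grid for large $n$. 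As it stands, your proposal is a plausible plan whose decisive step --- the only step that distinguishes $\mathbf{n}^3$ from posets that \emph{are} sphere orders --- is missing.
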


In Section \ref{sec:cg_existence}, we will use this theorem and Proposition \ref{bridge} to show that not every convex geometry of $cdim=3$ is representable by spheres.

We note that a convex geometry $(X,\varphi)$ can be thought of as a poset $(\mathcal{F}_\varphi,\subseteq)$, which could be measured using order dimension. The relationship between order dimension and the convex dimension pertinent to convex geometries was studied in \cite{KT23}. While in 2-dimensional geometries the convex dimension is the same and equals 2, the picture is quite different for $3$-dimensional geometries. In particular, there are convex geometries $\mbf P_n$ with the $dim(\mbf P_n)=3$ and $cdim(\mbf P_n)=n+1$.

\section{Existence of Convex Geometry with cdim=3 Not Representable by Spheres}\label{sec:cg_existence}

Our goal in this section is to prove the following result.

\begin{theorem}\label{main}
    There exists a convex geometry with $cdim = 3$ that is not representable by spheres in any $\mathbb{R}^t$.
\end{theorem}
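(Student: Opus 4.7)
The plan is to combine Theorem~\ref{thm:FFT_poset} with the contrapositive of Proposition~\ref{bridge}: I will construct a convex geometry $(X,\varphi)$ with $cdim=3$ such that the poset of join-irreducibles of its closure space $\mathcal{F}_\varphi$ is isomorphic to $\mathbf{n}^3$ for some $n \geq n_0$. Since $\mathbf{n}^3$ is not a sphere order in any $\R^t$, Proposition~\ref{bridge} rules out a sphere representation of $(X,\varphi)$ in any $\R^t$, giving the theorem.

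For the construction, fix $n \geq n_0$ and let $X$ be the underlying set of $\mathbf{n}^3$. Since $\mathbf{n}^3$ has order dimension $3$, I can pick three linear extensions $L_1, L_2, L_3$ of the product order whose intersection recovers it. Each $L_i$ determines a monotone convex geometry $\mathcal{F}_i$ on $X$ whose nonempty closed sets are the initial segments $\downarrow_{L_i}(x)$ for $x \in X$ (Remark~\ref{rem:linearCG}). Define $\mathcal{F}_\varphi := \mathcal{F}_1 \vee \mathcal{F}_2 \vee \mathcal{F}_3$ and let $\varphi$ be the associated closure operator; Theorem~\ref{CGjoin}(1) says this is a convex geometry, and the definition of convex dimension gives $cdim(X,\varphi) \leq 3$ immediately.

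The essential verifications are that $\mathrm{Ji}(\mathcal{F}_\varphi) \cong \mathbf{n}^3$ and that $cdim(X,\varphi) = 3$. Using the explicit formula for joins of closure spaces recalled after Definition~\ref{join}, $\varphi(\{x\})$ equals the intersection of the smallest closed sets of the $\mathcal{F}_i$ containing $x$, namely $\bigcap_{i=1}^{3} \downarrow_{L_i}(x) = \downarrow_{\mathbf{n}^3}(x)$. The canonical bijection $x \mapsto \varphi(\{x\})$ between $X$ and $\mathrm{Ji}(\mathcal{F}_\varphi)$ used in the proof of Proposition~\ref{bridge} then identifies the Ji poset with the principal down-sets of $\mathbf{n}^3$ under inclusion, which is isomorphic to $\mathbf{n}^3$. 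For the matching lower bound on $cdim$, I will observe that any expression of a convex geometry as a join of $k$ monotone geometries with underlying linear orders $M_1, \ldots, M_k$ yields $\varphi(\{x\}) \subseteq \varphi(\{y\})$ iff $x \leq_{M_i} y$ for every $i$, exhibiting $\mathrm{Ji}(\mathcal{F}_\varphi)$ as the intersection of $k$ linear orders on $X$ and hence $\dim(\mathrm{Ji}(\mathcal{F}_\varphi)) \leq cdim(X,\varphi)$. Together with $\dim(\mathbf{n}^3) = 3$, this forces $cdim(X,\varphi) \geq 3$.

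The only delicate step is the identification $\varphi(\{x\}) = \downarrow_{\mathbf{n}^3}(x)$ together with the small lemma $\dim(\mathrm{Ji}) \leq cdim$ that pins down the lower bound on convex dimension; both are short bookkeeping arguments from the definitions in Section~2. I anticipate splitting these into two propositions preceding Theorem~\ref{main}, matching the outline announced in the introduction, after which the main theorem is an immediate application of the contrapositive of Proposition~\ref{bridge} to the $\mathbf{n}^3$ not-a-sphere-order fact from Theorem~\ref{thm:FFT_poset}.
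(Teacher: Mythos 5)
Your proposal is correct and follows the same overall strategy as the paper: form the join of three monotone geometries coming from linear extensions of $\mathbf{n}^3$, identify the join-irreducibles of the resulting closure space with the principal down-sets of $\mathbf{n}^3$, and then combine Proposition~\ref{bridge} with Theorem~\ref{thm:FFT_poset}. The two places where you diverge are minor but worth noting. First, the paper works with three explicit lexicographic extensions ($123$-, $231$-, $312$-lex) and verifies the isomorphism $\mathrm{Ji}(\mathcal{F})\cong\mathbf{n}^3$ by hand in Propositions~\ref{prop:isom1} and \ref{prop:isom2}, whereas you take an arbitrary realizer of size $3$ and get $\varphi(\{x\})=\bigcap_i\downarrow_{L_i}(x)=\downarrow x$ directly from the realizer property; this is slicker but quietly relies on the standard fact $\dim(\mathbf{n}^3)=3$, which you should state (the upper bound is what the construction uses, and it is all that is needed for $cdim\leq 3$). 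Second, for the exact value $cdim=3$ the paper argues indirectly: by Cz\'edli's theorem every geometry with $cdim\leq 2$ is representable by circles in the plane, so the constructed non-representable geometry must have $cdim=3$. You instead prove a small structural lemma, that any join decomposition into $k$ monotone geometries with linear orders $M_1,\dots,M_k$ gives $\varphi(\{x\})\subseteq\varphi(\{y\})$ iff $x\leq_{M_i}y$ for all $i$, hence $\dim(\mathrm{Ji}(\mathcal{F}_\varphi))\leq cdim$, and then use $\dim(\mathbf{n}^3)=3$; this argument is correct (both directions of the iff check out, and injectivity of $x\mapsto\varphi(\{x\})$ follows from antisymmetry of the $M_i$), and it has the advantage of being self-contained and purely order-theoretic rather than leaning on the circle-representation theorem of \cite{Cz14}, at the cost of introducing and proving an extra lemma that the paper avoids.
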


To achieve this, we provide here an explicit construction of a convex geometry with $cdim=3$, such that its poset of join-irreducibles is isomorphic to $\mbf n^3$. Representing this convex geometry by spheres in $\R^t$ would also result in an inclusion representation of $\mbf n^3$ by those spheres, as shown in Proposition \ref{bridge}, which we know for large enough $n$ is not possible by Theorem \ref{thm:FFT_poset}. Hence, this would suffice in proving the existence of a convex geometry not representable by spheres in the space of any dimension.

\

For the remainder of this section, we set the following notation. Fix $X = \{0, 1, 2, \ldots, n-1\}^3 \subseteq \mathbb{Z}^3$, and set $\mbf P$ to be the poset $\mbf P = (X, \leq)$, where $\leq$ is the natural ordering induced from $\mbf n$ in the manner of Definition \ref{def:n_poset}.

We now consider linear extensions of this natural ordering in $\mbf P.$ In particular, we examine lexicographic orderings on the set $X$.

\begin{definition}[$123$-lex-ordering]
    We define the poset $(X, \preceq_1)$. Given any two points $x = (x_1 , x_2  , x_3)$ and $y= (y_1  , y_2 , y_3)$ in $X$, we say
    \begin{align*}
        (x_1 , x_2  , x_3 ) \preceq_1 (y_1  , y_2 , y_3) &\text{ iff } x_1 \leq y_1 \\
        &\text{OR } x_1 = y_1, x_2 \leq y_2 \\
        &\text{OR } x_1 = y_1 , x_2 = y_2 , x_3 \leq y_3 
    \end{align*}
    This is precisely lexicographic ordering where we prioritize the values of the first coordinates, and if those are equal then the second coordinate, and finally the third.
\end{definition}

We can change the order of axes comparisons to get analogous but different lexicographic orderings as follows.

\begin{definition}[$231$-lex-ordering]
    We define the poset $(X, \preceq_2)$. Given any two points $x = (x_1 , x_2  , x_3)$ and $y= (y_1  , y_2 , y_3)$ in $X$, we say
    \begin{align*}
        (x_1 , x_2  , x_3 ) \preceq_2 (y_1  , y_2 , y_3) &\text{ iff } x_2 \leq y_2 \\
        &\text{OR } x_2 = y_2, x_3 \leq y_3 \\
        &\text{OR } x_2 = y_2 , x_3 = y_3 , x_1 \leq y_1
    \end{align*}
    This time we look first at the second coordinates, then the third, and finally the first.
\end{definition}

\begin{definition}[$312$-lex-ordering]
    We define the poset $(X, \preceq_3)$. Given any two points $x = (x_1 , x_2  , x_3)$ and $y= (y_1  , y_2 , y_3)$  in $X$, we say
    \begin{align*}
        (x_1 , x_2  , x_3 ) \preceq_3 (y_1  , y_2 , y_3) &\text{ iff } x_3 \leq y_3 \\
        &\text{OR } x_3 = y_3, x_1 \leq y_1 \\
        &\text{OR } x_3 = y_3 , x_1 = y_1 , x_2 \leq y_2
    \end{align*}
    This time we look first at the third coordinates, then the first, and finally the second.
\end{definition}

It is not hard to see that the lexicographic orderings are not just partial orderings, but rather organize the elements of $X$ into a chain. For example, if we consider $\mbf 2^3$, using $123$-lex-ordering instead gives us the following chain:

\begin{center}
    \begin{tikzpicture}
        \draw[-](0,0)--(10.5,0);
    
        \filldraw[black] (0,0) circle (2pt);
        \node at (0,.4) {(0,0,0)};
        
        \filldraw[black] (1.5,0) circle (2pt);
        \node at (1.5,.4) {(0,0,1)};
        
        \filldraw[black] (3,0) circle (2pt);
        \node at (3,.4) {(0,1,0)};
        
        \filldraw[black] (4.5,0) circle (2pt);
        \node at (4.5,.4) {(0,1,1)};
    
        \filldraw[black] (6,0) circle (2pt);
        \node at (6,.4) {(1,0,0)};
    
        \filldraw[black] (7.5,0) circle (2pt);
        \node at (7.5,.4) {(1,0,1)};
    
        \filldraw[black] (9,0) circle (2pt);
        \node at (9,.4) {(1,1,0)};
    
        \filldraw[black] (10.5,0) circle (2pt);
        \node at (10.5,.4) {(1,1,1)};
    \end{tikzpicture}
\end{center}

Now, we can use precisely the orderings $\preceq_1, \preceq_2$ and $\preceq_3$ (from Definitions $2, 3$ and $4$) -- respectively being the 123-lex-ordering, 231-lex-ordering, and 312-lex-ordering -- to generate a convex geometry following Remark \ref{rem:linearCG} and Theorem \ref{CGjoin}. More specifically, each of these three linear orders can be thought of as a monotone convex geometry on $X$ in the manner of Remark \ref{rem:linearCG}: so we obtain three linear geometries which we denote $(X,\mc F_1), (X,\mc F_2), (X,\mc F_3)$. Taking the join of these three linear geometries in the manner of Definition \ref{join} results in another convex geometry by Theorem \ref{CGjoin}. Finally, by Definition \ref{cdim}, this resultant convex geometry has $cdim \leq 3$. Call this geometry $(X, \mc F)$, with $X$ the base set and $\mc F$ as the closure space. Let us now prove that the poset of join-irreducibles of $\mc F$, denoted by $(\mathrm{Ji}(\mc F), \subseteq)$, is isomorphic to $\mbf P=(X,\leq)$.

It is well known that any join-irreducible in a standard closure system is precisely the minimal closed set generated by some singleton $x$ from base set $X$. Indeed, in standard closure system there is a one-to-one correspondence between elements of the base set and join-irreducibles. Denote by $J(x)$ the join-irreducible convex set in $\mc F$ corresponding to $x$. Recall the procedure of generating a convex geometry of $cdim\leq m$ as the intersection of closed sets of $m$ linear convex geometries on $X$: this is how we constructed $(X, \mc F)$ from our three linear lexicographic geometries. It follows that the smallest closed set generated by $x\in X$, equivalently $J(x)$, is the intersection of minimal closed sets generated by $x$ in each linear geometry. In summary, our observations in this paragraph state the following:

\begin{proposition}
  $J(x)$ will be obtained by intersecting the smallest convex set containing $x$ in each of the three generating linear geometries.  
\end{proposition}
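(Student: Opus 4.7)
The plan is to verify the equality $J(x) = C_1(x) \cap C_2(x) \cap C_3(x)$ by double inclusion, where $C_i(x)$ denotes the smallest closed set of the linear geometry $\mc F_i$ containing $x$ (which exists and is unique since $\mc F_i$ is a closure space). The only ingredient needed beyond the definitions is the explicit formula for the join of closure spaces stated right after Definition~\ref{join}, namely
\[
\mc F \;=\; \mc F_1 \vee \mc F_2 \vee \mc F_3 \;=\; \{F_1 \cap F_2 \cap F_3 : F_i \in \mc F_i,\, i = 1,2,3\},
\]
together with the characterisation of $J(x)$ as the minimum element of $\mc F$ containing $x$, which itself rests on the standardness of $(X,\mc F)$ recorded in the paragraph immediately preceding the proposition.

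For the inclusion $J(x) \subseteq C_1(x) \cap C_2(x) \cap C_3(x)$, I will note that $C_1(x) \cap C_2(x) \cap C_3(x)$ has the prescribed triple-intersection form and therefore lies in $\mc F$; since it contains $x$, minimality of $J(x)$ forces the inclusion. For the reverse inclusion, I will invoke the formula above to decompose $J(x) = F_1 \cap F_2 \cap F_3$ with some $F_i \in \mc F_i$. Because $x \in J(x) \subseteq F_i$, the minimality of $C_i(x)$ inside $\mc F_i$ yields $C_i(x) \subseteq F_i$ for each $i$; intersecting over $i$ gives $C_1(x) \cap C_2(x) \cap C_3(x) \subseteq F_1 \cap F_2 \cap F_3 = J(x)$.

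There is no genuine obstacle, as the proposition is essentially a formal unpacking of the definition of the join together with the minimality characterisation of $J(x)$. The only subtlety worth flagging is that the triple-intersection decomposition of an arbitrary element of $\mc F$ is not unique, so one should not attempt to identify the witnesses $F_i$ above with the $C_i(x)$; fortunately all that matters is the domination $C_i(x) \subseteq F_i$, which comes for free from minimality in the respective linear geometry.
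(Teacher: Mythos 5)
Your proof is correct and follows essentially the same route as the paper: the paper's argument is exactly that, since $\mc F$ is the join $\mc F_1 \vee \mc F_2 \vee \mc F_3$ consisting of triple intersections of closed sets, the minimal closed set of $\mc F$ containing $x$ (which is $J(x)$ by standardness) is the intersection of the minimal closed sets containing $x$ in each $\mc F_i$. You merely make explicit, via the two inclusions, what the paper states in one line, so there is nothing to add.
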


Note importantly that the smallest convex set containing $x$ in the linear geometry $(X, \mc F_i)$ is precisely the down-set with respect to $\preceq_i$. That is, the minimal convex set containing $x$ in the $i$th of the three generating geometries is exactly $\{y \in X : y \preceq_i x\}$. The intersection over $i$ gives us $J(x)$.

So, we know there is a one-to-one correspondence between elements of $X$ and $\mathrm{Ji}(\mc F)$, and we also understand the nature of this correspondence. The final two propositions show that the correspondence preserves ordering, and hence conclude proving that $(\mathrm{Ji}(\mc F),\subseteq)$ is isomorphic to $\mbf P = (X,\leq)$.

\begin{proposition}\label{prop:isom1}
    If $x \leq y$ in $\mbf P$, then $J(x) \subseteq J(y)$.
\end{proposition}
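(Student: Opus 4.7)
The plan is to reduce the claim to a simple observation: each lexicographic order $\preceq_i$ is a linear extension of the componentwise order $\leq$ on $X$. By the discussion just before the statement, the minimal closed set containing $x$ in the $i$th generating linear geometry is exactly the $\preceq_i$-down-set of $x$, and taking intersections gives
\[
J(x) \;=\; \bigcap_{i=1}^{3}\{\,z\in X : z \preceq_i x\,\}.
\]
So membership in $J(x)$ is controlled entirely by the three lexicographic orders, and the only thing we need is to transport the hypothesis $x\le y$ into each of them.

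First I would verify that $\preceq_1,\preceq_2,\preceq_3$ are linear extensions of $\le$. This is an immediate case analysis from the definitions: if $x=(x_1,x_2,x_3)\le (y_1,y_2,y_3)=y$ componentwise, then for $\preceq_1$ either $x_1<y_1$, in which case $x\preceq_1 y$ at once, or $x_1=y_1$, in which case we defer to the second coordinate, and so on; in every branch the componentwise inequalities $x_j\le y_j$ force $x\preceq_1 y$. The arguments for $\preceq_2$ and $\preceq_3$ are identical up to a cyclic relabeling of coordinates.

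With this in hand, the main step is a two-line chase. Assume $x\le y$ in $\mathbf{P}$ and let $z\in J(x)$. For each $i\in\{1,2,3\}$ we have $z\preceq_i x$ by the characterization above, and $x\preceq_i y$ by the linear-extension property just established; transitivity of $\preceq_i$ gives $z\preceq_i y$. Since this holds for every $i$, we conclude $z\in J(y)$, proving $J(x)\subseteq J(y)$.

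I do not expect any real obstacle here: the whole content is packaged in the fact that each $\preceq_i$ extends $\le$, which is purely definitional. The only thing to be slightly careful about is to state clearly that the ``smallest convex set containing $x$'' in each linear geometry $(X,\mathcal{F}_i)$ is the $\preceq_i$-down-set of $x$, so the reader sees why the intersection description of $J(x)$ is legitimate before the transitivity argument is invoked.
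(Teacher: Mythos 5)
Your proof is correct and follows essentially the same route as the paper: both reduce the claim to showing $x \preceq_i y$ for each $i$ (i.e., that each lexicographic order extends the componentwise order) and then conclude containment of the corresponding down-sets, hence of their intersections. The only difference is that you spell out via transitivity the step the paper dismisses as ``clearly imply,'' which is a harmless elaboration, not a different argument.
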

\begin{proof}
    It suffices to show that $x \preceq_i y$ for each $i$, because of how we obtain $J(x)$ and $J(y)$ this would clearly imply $J(x) \subseteq J(y)$. 

    Write $x = (x_1, x_2, x_3)$ and $y = (y_1, y_2, y_3)$. We are told $x \leq y$, so we must have $x_i \leq y_i$, but this directly implies $x \preceq_i y$ by definition of the lex-ordering. This holds for any $i=1,2,3$, and so finishes our proof.
\end{proof}

\begin{proposition}\label{prop:isom2}
    If $x,y$ are incomparable in $\mbf P$, then $J(x), J(y)$ are incomparable (in terms of set-inclusion).
\end{proposition}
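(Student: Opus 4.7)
The plan is to produce explicit witnesses: the elements $x$ and $y$ themselves. Since $x \in J(x)$ and $y \in J(y)$ trivially, it suffices to establish $x \notin J(y)$ and $y \notin J(x)$; each of these shows one of the two non-inclusions required for incomparability of $J(x)$ and $J(y)$ under $\subseteq$.

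The central step is the following observation, which I will isolate and verify first: for each $i \in \{1,2,3\}$, whenever $z \preceq_i w$ in $X$, we must have $z_i \leq w_i$. This follows directly by unpacking the definition of each lex-ordering, since $\preceq_i$ prioritizes coordinate $i$. Indeed, a strict $\preceq_i$-comparison forces $z_i < w_i$, while an equality in the priority coordinate $z_i = w_i$ trivially gives $z_i \leq w_i$ as well. Equivalently, in contrapositive form, $z_i > w_i$ implies $z \not\preceq_i w$.

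Now I apply this to the incomparable pair $x, y$. Because $x \not\leq y$ in $\mbf P$, there exists an index $j \in \{1,2,3\}$ with $x_j > y_j$. By the contrapositive form of the observation, this yields $x \not\preceq_j y$, so $x$ does not lie in the smallest $\preceq_j$-down-set containing $y$. Since $J(y)$ is the intersection of these down-sets over all three lex-orderings (as recorded in the paragraph preceding Proposition~\ref{prop:isom1}), in particular $J(y)$ is contained in that $\preceq_j$-down-set, so $x \notin J(y)$. A symmetric application using $y \not\leq x$ produces $y \notin J(x)$, completing the argument.

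I do not anticipate a real obstacle: the proof is essentially the contrapositive of Proposition~\ref{prop:isom1}, with the only subtle point being the basic structural observation that in a lex-ordering prioritizing coordinate $i$, the relation $z \preceq_i w$ forces $z_i \leq w_i$. Everything else is bookkeeping.
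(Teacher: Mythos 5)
Your proposal is correct and follows essentially the same route as the paper: both arguments extract a coordinate $j$ with $x_j > y_j$ (and symmetrically one with $y_k > x_k$) from incomparability, note that the lex-ordering prioritizing that coordinate then strictly separates the two points, and conclude $x \notin J(y)$ and $y \notin J(x)$ while $x \in J(x)$, $y \in J(y)$. The only cosmetic difference is that you phrase the key step as the contrapositive of Proposition~\ref{prop:isom1}'s mechanism ($z \preceq_i w$ forces $z_i \leq w_i$), whereas the paper fixes the witnesses by a without-loss-of-generality choice of coordinates.
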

\begin{proof}
Write $x = (x_1, x_2, x_3)$ and $y = (y_1, y_2, y_3)$. For $x,y$ to be incomparable in $P$, we must precisely have the state that $x_i < y_i$, and $y_j < x_j$, for some $i \neq j$. Without loss of generality, assume that $x_1 < y_1$ and $y_2 < x_2$.

Now, because $x_1 < y_1$, we have $x \prec_1 y$ by definition of $123$-lex-ordering. This means $y \notin J(x)$. Similarly, because $y_2 < x_2$, we have $y \prec_2 x$. This further implies that $x \notin J(y)$.

So, we have $y \notin J(x)$ and $x \notin J(y)$, but of course $x \in J(x)$ and $y \in J(y)$, and we have hence proved that $J(x), J(y)$ are incomparable, when ordered by set-inclusion.
\end{proof}

Finally, we are ready to provide a quick proof of Theorem~\ref{main}.

\begin{proof}[Proof of Theorem~\ref{main}]
    For a given $n$, we constructed in this section a convex geometry $(X, \mc F)$. We showed that the join-irreducibles of $\mc F$, denoted $\mathrm{Ji}(F)$, are in bijection with elements of $P$, and Propositions \ref{prop:isom1} and \ref{prop:isom2} further show that they are in fact isomorphic as posets.

    Now, by Proposition~\ref{bridge} we know that any representation by spheres of $(X, \mc F)$ will result in $\mathrm{Ji}(\mc F)$, and hence, $\mbf P$, being a sphere order. But Theorem~\ref{thm:FFT_poset} tells us that for some $n_0$, if $n>n_0$ then $\mbf P$ is not a sphere order. So, for large enough $n$, $(X, \mc F)$ cannot be representable by spheres.

    Finally, $(X, \mc F)$ by construction has $cdim \leq 3$. However, in \cite{Cz14} it is shown that all convex geometries of $cdim \leq 2$ are representable by circles on the plane. So, in fact for $n>n_0$, $(X, \mc F)$ must have $cdim = 3$, which proves Theorem~\ref{main}.
\end{proof}

\section{Ellipsoid Orders}

In the survey on geometric containment orders, Fishburn and Trotter \cite{FT99} discuss various results related to poset representations via containment orders of different geometric objects such as angles, polygons and spheres. They also mention that representations by ellipsoids were not intensely studied. In \cite{FT02} they have shown that any 2-dimensional poset can be represented as a containment order by a family of \emph{similar} ellipsoids that share the same center. 

As for to convex geometries, the following result was shown by J. Kincses.

\begin{theorem}\cite{Kin17}\label{Kinc}
Any finite convex geometry with convex dimension $t$ can be represented in $\mathbb{R}^t$
with ellipsoids which are arbitrary close to a sphere. 
\end{theorem}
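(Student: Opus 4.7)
The plan is to exploit the join decomposition supplied by Theorem~\ref{CGjoin}: since $cdim(\mbf G) = t$, the closure space $\mc F_\varphi$ can be written as $\mc L_1 \vee \cdots \vee \mc L_t$, where each $\mc L_i$ is the monotone convex geometry of a linear order $\preceq_i$ on $X$. Writing $r_i(x)$ for the rank of $x$ in $\preceq_i$, this yields a clean rank-wise description of the closure: $z \in \varphi(Y)$ if and only if $r_i(z) \leq \max_{y \in Y} r_i(y)$ for every $i \in \{1, \ldots, t\}$. The task then reduces to realizing this rank-wise closure as the image of the $\ch_s$ operator on a family of near-spherical ellipsoids $\{E(x) : x \in X\} \subset \R^t$.

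For the ellipsoids, I would take each $E(x)$ to be a small anisotropic perturbation of a fixed unit sphere $S \subset \R^t$, with both the center and the eccentricities controlled by the rank vector $(r_1(x), \ldots, r_t(x))$. Concretely, one natural candidate fixes a small parameter $\varepsilon > 0$ and lets the $i$-th semi-axis of $E(x)$ be $1 + \varepsilon \alpha_i(r_i(x))$ and its center be $c(x) = \varepsilon \bigl(\beta_1(r_1(x)), \ldots, \beta_t(r_t(x))\bigr)$, for strictly increasing functions $\alpha_i, \beta_i$ to be chosen. Taking $\varepsilon$ small guarantees that all $E(x)$ stay uniformly close to $S$, so the ``arbitrarily close to a sphere'' conclusion is wired into the construction.

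The necessity direction is then straightforward: if $r_i(z) > \max_{y \in Y} r_i(y)$ for some $i$, monotonicity of $\alpha_i$ and $\beta_i$ in the $i$-th coordinate produces a hyperplane orthogonal to $e_i$ separating a portion of $E(z)$ from every $E(y)$ with $y \in Y$, so that $E(z) \not\subseteq \CH\bigl(\bigcup_{y \in Y} E(y)\bigr)$.

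The main obstacle is the sufficiency direction: if $r_i(z) \leq \max_{y \in Y} r_i(y)$ for every $i$, one must actually exhibit $E(z)$ inside the geometric convex hull $\CH\bigl(\bigcup_{y \in Y} E(y)\bigr)$. This is nontrivial because the rank-wise closure is typically strictly coarser than the geometric convex hull of any naive concentric or product-positioned configuration: for instance, the convex hull of two orthogonal thin boxes in $\R^2$ does not reach the ``corner'' $(M_1, M_2)$. Overcoming this gap is the heart of the argument and requires calibrating the displacements $\beta_i$ and the eccentricities $\alpha_i$ in concert so that extremal points of the $E(y)$ jointly cover every extremal direction of $E(z)$; a Carath\'eodory-type argument would then produce the required convex combinations explicitly. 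Once such a representation is obtained for some admissible $\varepsilon$, the strict inclusions and non-inclusions underlying the representation are open conditions on the ellipsoid parameters, so one can scale $\varepsilon$ downwards to push the whole family as close to $S$ as desired, yielding the near-spherical representation stated in the theorem.
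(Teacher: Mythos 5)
This statement is quoted in the paper from Kincses \cite{Kin17} and is not proved there, so your attempt can only be judged on its own merits. Your starting reduction is correct and is the natural one: writing $\mc F_\varphi=\mc L_1\vee\cdots\vee\mc L_t$ and observing that $z\in\varphi(Y)$ iff $r_i(z)\le\max_{y\in Y}r_i(y)$ for all $i$ is a valid description of the closure in a join of linear geometries, and your necessity argument is essentially sound, since the support function of $\CH\bigl(\bigcup_y E(y)\bigr)$ in the direction $e_i$ is the maximum of the supports of the $E(y)$, which is beaten by $E(z)$ when $r_i(z)$ exceeds all $r_i(y)$.

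However, there is a genuine gap, and you have located it yourself: the sufficiency direction is not proved, only described as requiring a calibration of the $\alpha_i,\beta_i$ ``in concert'' plus ``a Carath\'eodory-type argument.'' That calibration is precisely the content of the theorem. To see the difficulty concretely, note that for ellipsoids of the form you propose, containment $E(z)\subseteq\CH\bigl(\bigcup_y E(y)\bigr)$ is equivalent, to first order in $\varepsilon$, to the pointwise inequality $\sum_i\beta_i(r_i(z))u_i+\sum_i\alpha_i(r_i(z))u_i^2\le\max_{y\in Y}\bigl[\sum_i\beta_i(r_i(y))u_i+\sum_i\alpha_i(r_i(y))u_i^2\bigr]$ for every unit vector $u$; rank domination only gives this at the $t$ coordinate directions $u=e_i$, and (as your own two-orthogonal-boxes example illustrates) domination at coordinate directions does not propagate to intermediate directions for a generic product-type perturbation, especially when different $y\in Y$ achieve the maxima in different coordinates. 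No choice of $\alpha_i,\beta_i$ is exhibited, and no argument is given that any choice works, so the proposal in effect reduces Kincses' theorem to itself. A secondary flaw: the closing remark that one can shrink $\varepsilon$ because the ``inclusions and non-inclusions \ldots are open conditions'' is not right as stated, since the required containments are closed (not open) conditions and may hold only with tangency; to rescale one needs the defining inequalities to be strict, or a homogeneity in $\varepsilon$ of the first-order conditions, which you have not established.
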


The construction of ellipsoids in Theorem \ref{Kinc} does not assume similarity of ellipsoids, but all of them contain a unit sphere in $\mathbb{R}^t$ and are themselves contained in the sphere of radius $s$. Taking $s$ close to $1$ allows to make them close to the unit sphere.

Another result in the same paper \cite{Kin17} formulates an Erd\"os-Szekeres type of obstruction from Dobbins et al.\! \cite{DHH16} that shows that not all convex geometries are represented by ellipses on the plane.

We also mention the result in \cite{Poly20-2} about representation of all geometries on 5-element set by ellipses on the plane. Thus, it would be interesting to learn what is the smallest size of base set of a geometry so that representation by ellipsoids exists only in $\R ^k$ with $k>2$.

We can now connect representation of geometries by ellipsoids and the notion of ellipsoid order. We mention that Proposition \ref{bridge} could be formulated for ellipsoids in place of spheres, which connects representation of convex geometries and posets.

\begin{theorem}
    Every finite poset $\mbf P=(X,\leq)$ is an ellipsoid order.
\end{theorem}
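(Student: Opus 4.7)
The plan is to combine Birkhoff's representation theorem, Kincses's result (Theorem~\ref{Kinc}), and the ellipsoid analogue of Proposition~\ref{bridge} mentioned just above. Given an arbitrary finite poset $\mbf P = (X,\leq)$, the first step is to realize $\mbf P$ as the poset of join-irreducibles of some finite convex geometry. For this, I would take the closure operator $\varphi \colon 2^X \to 2^X$ defined by $\varphi(Y) = \,\downarrow Y$; as noted in the discussion after Theorem~\ref{thm:birkhoff}, this operator satisfies the anti-exchange property, so $(X,\varphi)$ is a convex geometry, with closed sets being the down-sets of $\mbf P$.

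The second step is to identify the join-irreducibles of $\mc F_\varphi$ with the elements of $\mbf P$. By Birkhoff's theorem, the join-irreducibles of the down-set lattice are precisely the principal down-sets $\{\downarrow x : x \in X\}$, and Example~\ref{exam:down_set} gives $\downarrow x \subseteq \,\downarrow y$ iff $x \leq y$. Hence the poset $(\mathrm{Ji}(\mc F_\varphi), \subseteq)$ is order-isomorphic to $\mbf P$.

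The third step is to apply Theorem~\ref{Kinc}: since $(X,\varphi)$ is a finite convex geometry, it has some finite convex dimension $t$, and so admits a representation by ellipsoids in $\mathbb{R}^t$ via the convex hull operator on ellipsoids. Finally, invoking the ellipsoid version of Proposition~\ref{bridge} (which the authors remark holds by replacing spheres with ellipsoids throughout the argument), such a representation of $(X,\varphi)$ simultaneously furnishes an inclusion representation of $(\mathrm{Ji}(\mc F_\varphi), \subseteq)$ by ellipsoids in $\mathbb{R}^t$. Composing with the isomorphism to $\mbf P$ from the second step shows that $\mbf P$ is an ellipsoid order.

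There is no real obstacle here beyond assembling the pieces; the only small point to check is that the ellipsoid analogue of Proposition~\ref{bridge} really goes through verbatim, which it does because the proof of Proposition~\ref{bridge} used only the fact that $\ch_s$ acts as a downward-closure in the inclusion order of the shapes, a property that is independent of whether the shapes are spheres or ellipsoids.
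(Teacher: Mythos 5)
Your proposal is correct and follows essentially the same route as the paper: realize $\mbf P$ as the poset of join-irreducibles of the down-set convex geometry via Birkhoff's theorem, apply Theorem~\ref{Kinc} to represent that geometry by ellipsoids in $\R^t$ with $t$ its convex dimension, and invoke the ellipsoid analogue of Proposition~\ref{bridge}. Your closing remark correctly identifies why that analogue goes through, namely that a single ellipsoid, like a single ball, is convex, so the convex hull operator restricted to a singleton reduces to set containment.
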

\begin{proof}
    First, we show that every poset is realized as the poset of join-irreducible elements of some convex geometry. Indeed, starting from $\mbf P=(X,\leq)$ we can build the lattice $\mbf D= \down(X,\leq)$
    of down-sets of $\mbf P$, which is a distributive lattice by Birkhoff's Theorem~\ref{thm:birkhoff}. Every finite distributive lattice is a convex geometry, since related closure operator satisfies the the Anti-Exchange Property. (See more general description of lattice properties of finite convex geometries in \cite[Theorem 5-2.1]{AdNa16}). Since the join-irreducibles of this convex geometry are precisely the down-sets of singletons, i.e.\! down-sets of the form $\downarrow x$ for $x \in X$, we can conclude in the manner of Example~\ref{exam:down_set} that $\mbf P$ is realized by join-irreducible elements of this convex geometry.

    By Theorem \ref{Kinc} this geometry can be represented by ellipsoids in some space $\mathbb{R}^t$, where $t=cdim(\mbf D)$. In particular, the set of join-irreducible elements of $\mbf D$, which is isomorphic to $\mbf P$, will provide the ellipsoid containment representation of $\mbf P$.
\end{proof}

\printbibliography

\

\begin{flushright}\it{
KIRA ADARICHEVA\\
Hofstra University\\
Hempstead, New York, USA\\
\textit{E-mail address:} \href{mailto:kira.adaricheva@hofstra.edu}{\texttt{kira.adaricheva@hofstra.edu}}

\

ARAV AGARWAL\\
Bowdoin College\\
Brunswick, Maine, USA\\
\textit{E-mail address:} \href{mailto:aagarwal@bowdoin.edu}{\texttt{aagarwal@bowdoin.edu}}

\

NA'AMA NEVO\\
Northeastern University\\
Boston, Massachusetts, USA\\
\textit{E-mail address:} \href{mailto:nevo.n@northeastern.edu}{\texttt{nevo.n@northeastern.edu}}
 
}\end{flushright}

\end{document}